\newcommand{\bburl}[1]{\textcolor{blue}{\url{#1}}}
\newtheorem{thm}{Theorem}[section]
\newtheorem{cor}[thm]{Corollary}
\newtheorem{lem}[thm]{Lemma}
\newtheorem{prop}[thm]{Proposition}
\newtheorem{rek}[thm]{Remark}
\DeclareFixedFont{\ttb}{T1}{txtt}{bx}{n}{12} 
\DeclareFixedFont{\ttm}{T1}{txtt}{m}{n}{12}  
\definecolor{deepblue}{rgb}{0,0,0.5}
\definecolor{deepred}{rgb}{0.6,0,0}
\definecolor{deepgreen}{rgb}{0,0.5,0}
\newcommand\pythonstyle{\lstset{
language=Python,
basicstyle=\ttm,
morekeywords={self},              
keywordstyle=\ttb\color{deepblue},
emph={MyClass,__init__},          
emphstyle=\ttb\color{deepred},    
stringstyle=\color{deepgreen},
frame=tb,                         
showstringspaces=false
}}
\newcommand\pythoninline[1]{{\pythonstyle\lstinline!#1!}}
\definecolor{ao}{rgb}{0.0, 0.5, 0.0}
\newcommand{\lowersub}[2][0.7ex]{%
  _{\raisebox{-#1}{$\scriptstyle #2$}}%
}
\numberwithin{equation}{section}
\DeclareFontFamily{U}{mathx}{}
\DeclareFontShape{U}{mathx}{m}{n}{<-> mathx10}{}
\DeclareSymbolFont{mathx}{U}{mathx}{m}{n}
\DeclareMathAccent{\widehat}{0}{mathx}{"70}
\DeclareMathAccent{\widecheck}{0}{mathx}{"71}
\begin{document}

\title{Integers Having $F_{2k}$ in Both Zeckendorf And Chung-Graham Decompositions}

\author[L. Bustos]{Lucas Bustos}
\email{\textcolor{blue}{\href{mailto: lbustos1007@tamu.edu}{lbustos1007@tamu.edu}}}
\address{Department of Mathematics\\ Texas A\&M University, College Station, TX 77843, USA}

\author[H. V. Chu]{H\`ung Vi\d{\^e}t Chu}
\email{\textcolor{blue}{\href{mailto:hchu@wlu.edu}{hchu@wlu.edu}}}
\address{Department of Mathematics\\ Washington and Lee University, Lexington, VA 24450, USA}

\author[M. Kim]{Minchae Kim}
\email{\textcolor{blue}{\href{mailto:alsco1103@tamu.edu}{alsco1103@tamu.edu}, 
\href{mailto:claireminchaekim@gmail.com}{claireminchaekim@gmail.com}}}
\address{Department of Mathematics\\ Texas A\&M University, College Station, TX 77843, USA}

\author[U. Lee]{Uihyeon Lee}
\email{\textcolor{blue}{\href{mailto:uihyeonlee@tamu.edu}{uihyeonlee@tamu.edu}, \href{mailto:uihyeon8711@gmail.com}{uihyeon8711@gmail.com}}}
\address{Department of Mathematics\\ Texas A\&M University, College Station, TX 77843, USA}

\author[S. Shankar]{Shreya Shankar}
\email{\textcolor{blue}{\href{mailto:shreyashankar_11@tamu.edu}{shreyashankar\_11@tamu.edu}}}
\address{Department of Mathematics\\ Texas A\&M University, College Station, TX 77843, USA}

\author[G. Tresch]{Garrett Tresch}
\email{\textcolor{blue}{\href{mailto:treschgd@tamu.edu}{treschgd@tamu.edu}}}
\address{Department of Mathematics\\ Texas A\&M University, College Station, TX 77843, USA}

\thanks{This work was partially supported by the College of Arts \& Sciences at Texas A\&M University. The first, third, fourth, and fifth named authors are undergraduate students at Texas A\&M University at the time of the paper, working under the guidance of the second and sixth named authors.}

\subjclass[2020]{11B39}

\keywords{Fibonacci numbers; Zeckendorf decomposition; Chung-Graham decomposition}

\maketitle
 
\begin{abstract}
Zeckendorf's theorem states that every positive integer can be uniquely decomposed into nonadjacent Fibonacci numbers. On the other hand, Chung and Graham proved that every positive integer can be uniquely written as a sum of even-indexed Fibonacci numbers with coefficients $0,1$, or $2$ such that between two coefficients $2$, there is a coefficient $0$. We discover a correspondence between a lexicographically ordered sublist of Zeckendorf decompositions and letters in the golden string $\mathcal{S}$. Likewise, we identify a dual correspondence for Chung-Graham decompositions. We then use these correspondences to give the set of all positive integers having $F_{2k}$ in both of their Zeckendorf and Chung-Graham decompositions.   
\end{abstract}

\tableofcontents


\section{Introduction}
For $n\in \mathbb{N}\cup \{0\}$, let $F_n$ be the $n$th Fibonacci number, i.e., $F_{n+2} = F_{n+1} + F_n$ for all $n\ge 0$ with $F_0 = 0$ and $F_1 = 1$. Zeckendorf's theorem \cite{Ze} states that every positive integer can be written uniquely as a sum of nonadjacent Fibonacci numbers in $(F_n)_{n=2}^\infty$. This sum is often called the \textit{Zeckendorf decomposition} of integers, which can be found by the greedy algorithm. For example, the largest Fibonacci number in the Zeckendorf decomposition of $28$ is $F_{8} = 21$ because $F_8$ is the largest Fibonacci number not greater than $28$. The next term in the decomposition is the largest Fibonacci number at most $28-21 = 7$, which is $F_5 = 5$. The remainder after taking out $F_5$ is $7-5 = 2 = F_3$; hence,
$$28 \ =\ 21 + 5 + 2 \ =\ F_8 + F_5 + F_3.$$
There has been extensive research on the structure of the Zeckendorf decomposition and its generalizations. For a sample of notable work, see \cite{Ba, CHS, CFHMN, Cha1, Cha11, Day1, Day2, De1, De2, DDKMMV, DFFHMPP, DS,  G10, HW, Ho, MMMS, MMMMS, Sh}.

On the other hand, Chung and Graham \cite{CG} proved that every positive integer can be uniquely written as a sum of even-indexed Fibonacci numbers using coefficients $0, 1$, or $2$ such that between two coefficients $2$, there exists a coefficient $0$. We call the sum the \textit{Chung-Graham decomposition} of $n$. 
\begin{thm}\cite[Lemma 1]{CG}
Every positive integer $n$ can be uniquely represented as a sum
$$n\ =\ \sum_{i = 1}^\infty c_i F_{2i}, \mbox{ where } c_i\in \{0, 1, 2\},$$
so that if $c_{i_1} = c_{i_2} = 2$, then there exists $j$ between $i_1$ and $i_2$ such that $c_j = 0$.
\end{thm}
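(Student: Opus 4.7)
The strategy is to derive both existence and uniqueness from a max-value lemma which controls the largest integer expressible via a valid Chung-Graham sequence on positions $1, \ldots, k$.

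First I would prove the auxiliary identities $F_{2k+2} - 2F_{2k} = F_{2k-1}$ (from $F_{2k+2} = F_{2k+1} + F_{2k} = 2F_{2k} + F_{2k-1}$) and $\sum_{i=1}^{k-1} F_{2i} = F_{2k-1} - 1$ (a standard induction). Combining these gives the \emph{max-value lemma}: any valid Chung-Graham sequence $(c_1, \ldots, c_k)$ satisfies $\sum_{i=1}^k c_i F_{2i} \le F_{2k+2} - 1$, with equality realized by $(c_k, c_{k-1}, \ldots, c_1) = (2, 1, \ldots, 1)$. I would prove this by induction on $k$ with case analysis on $c_k$; the tight subcase $c_k = 2,\, c_{k-1} = 1$ gives exactly $2F_{2k} + F_{2k-1} - 1 = F_{2k+2} - 1$.

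For existence, I induct on $n$. Given $n \ge 1$, let $k$ be the largest index with $F_{2k} \le n$. If $F_{2k} \le n < 2F_{2k}$, set $c_k = 1$ and recurse on $n - F_{2k} < F_{2k}$; since $c_k \ne 2$, the Chung-Graham property is preserved automatically. If $2F_{2k} \le n < F_{2k+2}$, set $c_k = 2$; the first identity shows $n - 2F_{2k} < F_{2k-1} < F_{2k}$, so the recursion stays below position $k$. For uniqueness, suppose $n$ has two distinct Chung-Graham representations $(c_i)$ and $(c'_i)$, and let $m$ be the largest index where they differ; WLOG $c_m > c'_m$. Since the restriction of a Chung-Graham sequence to positions $<m$ is again Chung-Graham, the max-value lemma yields
$$(c_m - c'_m) F_{2m} \;=\; \sum_{i<m}(c'_i - c_i) F_{2i} \;\le\; \sum_{i<m} c'_i F_{2i} \;\le\; F_{2m} - 1,$$
contradicting that the left side is at least $F_{2m}$.

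The main obstacle is verifying the Chung-Graham property in the $c_k = 2$ subcase of existence: one must show there is a $0$ strictly between $c_k$ and any other $2$ in the combined representation. From $n - 2F_{2k} < F_{2k-1} < 2F_{2k-2}$ one gets $c_{k-1} \in \{0, 1\}$, ruling out adjacent 2's. For the largest $j \le k-2$ with $c_j = 2$ in the recursive rep, I argue by contradiction: if none of $c_{k-1}, \ldots, c_{j+1}$ is $0$, then each is $\le 1$ (being above the largest $2$) and so each equals $1$, giving
$$n - 2F_{2k} \;\ge\; F_{2k-2} + F_{2k-4} + \cdots + F_{2j+2} + 2F_{2j} \;\ge\; F_{2k-1}$$
by the identity $F_{2k-1} = F_{2k-2} + F_{2k-4} + \cdots + F_4 + 2F_2$, contradicting $n - 2F_{2k} < F_{2k-1}$.
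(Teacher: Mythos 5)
The paper does not actually prove this statement: it is quoted from Chung--Graham \cite{CG}, with only the remark that the decomposition is produced by a greedy algorithm, so there is no internal proof to compare against. Your proposal is a correct, self-contained route: the existence half is precisely that greedy algorithm (take the largest $F_{2k}\le n$, use coefficient $2$ when $2F_{2k}\le n$), and your verification of the Chung--Graham condition in the $c_k=2$ branch --- looking only at the largest lower index $j$ with $c_j=2$ and using $F_{2k-2}+F_{2k-4}+\cdots+F_{2j+2}+2F_{2j}=F_{2k-1}+F_{2j-2}\ge F_{2k-1}$ --- is sound; the uniqueness argument (largest index of disagreement plus the max-value bound $\sum_{i\le m-1}c_iF_{2i}\le F_{2m}-1$, noting that restricting a Chung--Graham sequence to an initial block of positions stays Chung--Graham) is also correct.

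The one step you should write out more carefully is the max-value lemma itself. Induction with case analysis on $c_k$ alone does not close the case $c_k=2$: applying the inductive bound to the tail gives only $2F_{2k}+(F_{2k}-1)$, which exceeds $F_{2k+2}-1$. You must invoke the Chung--Graham condition below the top $2$. For instance: if no $c_i=2$ for $i<k$, then $\sum_{i<k}c_iF_{2i}\le F_{2k-1}-1$ and you get exactly $F_{2k+2}-1$; otherwise let $m<k$ be the largest index with $c_m=0$ (it exists because a $0$ must separate the top $2$ from any lower $2$), observe that maximality forces $c_{m+1}=\cdots=c_{k-1}=1$, and apply strong induction to positions $1,\dots,m-1$ to get $\sum_{i\le k}c_iF_{2i}\le 2F_{2k}+(F_{2k-1}-F_{2m+1})+(F_{2m}-1)=F_{2k+2}-1-F_{2m-1}$. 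This is the same device you already use in the existence step, so the gap is minor and fixable entirely within your outline.
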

The proof of \cite[Lemma 1]{CG} showed that the Chung-Graham decomposition is also determined by the greedy algorithm. At each step, we find the largest $j$ such that $F_{2j}$ is not greater than the remainder. If $2F_{2j}$ is also less than the remainder, the next term is $2F_{2j}$; otherwise, the next term is $F_{2j}$. For example, 
$$141 \ =\ 2F_{10} + F_{8} + F_6 + 2F_2.$$

Recent work generalized the Chung-Graham decomposition to  use equally-spaced Fibonacci numbers \cite{Cha2}, studied the conversion between Zeckendorf and Chung-Graham decompositions \cite{Bu}, and characterized integers without a fixed term in their Chung-Graham decompositions \cite{CKV}, thus continuing the work by Kimberling \cite{Kim} who characterized integers without $F_2$ in their Zeckendorf decompositions and by Griffiths \cite{G14} who characterized integers having $F_k$ in their Zeckendorf decompositions. These results inspired us to 
discover a correspondence between a lexicographically ordered sublist of Zeckendorf decompositions and letters in the golden string $\mathcal{S}$. Likewise, we identify a dual correspondence for Chung-Graham decompositions. We then use these correspondences to characterize integers that have $F_{2k}$ in both Zeckendorf and Chung-Graham decompositions.

To formally state our results, let us recall the golden string $\mathcal{S}$. We start with $S_1 = B$, $S_2 = BA$, and $S_n = S_{n-1}:S_{n-2}$ for $n\ge 3$, where $S_{n-1}:S_{n-2}$ is the concatenation of the two finite strings. For example, 
\begin{align*}
    S_3 &\ =\ S_2: S_1\ =\ BAB,\\
    S_4 &\ =\ S_3:S_2\ =\ BABBA, \mbox{ and}\\
    S_5 &\ =\ S_4:S_3\ =\ BABBABAB.
\end{align*}
It follows that $|S_n| = F_{n+1}$ for all $n\ge 1$. The golden string $\mathcal{S}$ is the infinite string whose first $F_{n+1}$ letters is the same as the finite string $S_n$. In other words, $\mathcal{S} = \lim_{n\rightarrow \infty} S_n$ in the sense that for every $i$, the $i$th letter of $\mathcal{S}$, denoted by $\mathcal{S}(i)$, is the same as the $i$th letter of $S_n$ for sufficiently large $n$. The first few letters of $\mathcal{S}$ are
$$BABBABABBABBABABBABAB\ldots\,.$$

Throughout the paper, we fix $k\ge 1$. Let $A_{2k}$ be the set of all positive integers whose Zeckendorf decomposition has $F_{2k}$ as the smallest summand. Let $p(j)$ be the $j$th smallest integer in $A_{2k}$. Similarly, let $B_{2k}$ be the set of all positive integers whose Chung-Graham decomposition has $F_{2k}$ or $2F_{2k}$ as the smallest summand. Let $q(j)$ be the $j$th smallest integer in $B_{2k}$. For a positive integer $n$, we use $Z(n)$ and $CG(n)$ to denote the set of all Fibonacci numbers in the Zeckendorf decomposition and the Chung-Graham decomposition of $n$, respectively.

\begin{table}[H]
\centering
\begin{tabular}{ c|c c c c c c c |c |c }
$A_{2k}$& & & &$Z(p(j))$ & & & & $F_{2k}\in CG(p(j))$ ?& $\mathcal{S}$\\
\hline
$p(1)$&$F_{2k}$ & & & & & & &  Y & \\ 
$p(2)$&$F_{2k}$ & & $F_{2k+2}$ & & & & & Y & B\\ 
$p(3)$&$F_{2k}$ & & & $F_{2k+3}$ & & & & N & A\\ 
$p(4)$&$F_{2k}$ & & & & $F_{2k+4}$ & & & Y & B\\
$p(5)$&$F_{2k}$ & &$F_{2k+2}$ & & $F_{2k+4}$ & & & Y & B\\
$p(6)$&$F_{2k}$ & && & & $F_{2k+5}$ & & N & A\\ 
$p(7)$&$F_{2k}$ && $F_{2k+2}$ & & & $F_{2k+5}$ & & Y & B\\
$p(8)$&$F_{2k}$ & && $F_{2k+3}$ & &  $F_{2k+5}$ & & N &A\\
$p(9)$&$F_{2k}$ & && & & & $F_{2k+6}$ & Y & B\\
$p(10)$&$F_{2k}$ && $F_{2k+2}$ & & & & $F_{2k+6}$ & Y & B\\
$p(11)$&$F_{2k}$ & && $F_{2k+3}$ & & &  $F_{2k+6}$ & N & A\\
$p(12)$&$F_{2k}$ & & & & $F_{2k+4}$ & &  $F_{2k+6}$ & Y & B\\
$p(13)$&$F_{2k}$ & &$F_{2k+2}$& & $F_{2k+4}$ & &  $F_{2k+6}$ & Y & B\\
  \vdots& & & & & & & &
\end{tabular}
\caption{Integers $p(j)$ in $A_{2k}$. The first column specifies $p(j)$, while the second column gives the Zeckendorf decomposition of $p(j)$. The third column tells whether $F_{2k}\in CG(p(j))$, where Y and N stand for Yes and No, respectively. The golden string $\mathcal{S}$ is placed in the fourth column to be compared with the third column.}
\label{ZtablewithCG}
\end{table}

\begin{table}[H]
\centering
\begin{tabular}{ c| c c c c c c |c |c }
$B_{2k}$& & & $CG(q(j))$& & & & $F_{2k}\in Z(q(j))$? & $\mathcal{S}$\\
\hline
$q(1)$&$F_{2k}$ & & & &  & & Y &\\ 
$q(2)$&$2F_{2k}$ & & & &  & & N & B\\  
$q(3)$&$F_{2k}$ & & $F_{2k+2}$ & & & & Y & A\\  
$q(4)$&$2F_{2k}$ & &  $F_{2k+2}$ & & & & N & B\\  
$q(5)$&$F_{2k}$ & &  $2F_{2k+2}$ & & & & N & B\\  
$q(6)$&$F_{2k}$ &  &  & & $F_{2k+4}$ & & Y & A\\ 
$q(7)$&$2F_{2k}$ &  & & & $F_{2k+4}$ & & N & B\\  
$q(8)$&$F_{2k}$ & & $F_{2k+2}$ & & $F_{2k+4}$ & & Y & A \\  
$q(9)$&$2F_{2k}$ & & $F_{2k+2}$ & & $F_{2k+4}$ & & N & B\\  
$q(10)$&$F_{2k}$ & & $2F_{2k+2}$ & & $F_{2k+4}$ & & N & B\\  
$q(11)$&$F_{2k}$ &  & & &  $2F_{2k+4}$ & & Y & A\\ 
$q(12)$&$2F_{2k}$ &  & & & $2F_{2k+4}$ & & N & B\\  
$q(13)$&$F_{2k}$ & &  $F_{2k+2}$ & & $2F_{2k+4}$ & & N & B\\  
 \vdots& & & & & & & &
\end{tabular}
\caption{Integers $q(j)$ in $B_{2k}$. The first column specifies $q(j)$, while the second column gives the Chung-Graham decomposition of $q(j)$. The third column tells whether $F_{2k}\in Z(q(j))$, where Y and N stand for Yes and No, respectively. The golden string $\mathcal{S}$ is placed in the fourth column to be compared with the third column.}
\label{CGtablewithZ}
\end{table}

Tables \ref{ZtablewithCG} and \ref{CGtablewithZ} list the values of $A_{2k}$ and $B_{2k}$, and they suggest the following duality of correspondences between the decompositions and the golden string $\mathcal{S}$.

\begin{thm}\label{thmZtoCG}
For $j\ge 2$, the integer $p(j)$ in $A_{2k}$ has $F_{2k}\in CG(p(j))$ if and only if $\mathcal{S}(j-1) = B$. Moreover, $\min CG(p(j))\ge F_{2k}$ for all $p(j)\in A_{2k}$. 
\end{thm}

\begin{thm}\label{dual}
For $j\ge 2$, the integer $q(j)$ in $B_{2k}$ has $F_{2k}\in Z(q(j))$ if and only if $\mathcal{S}(j-1) = A$. Moreover, $\min Z(q(j))\ge F_{2k-2}$ for all $q(j)\in B_{2k}$. 
\end{thm}

Using these correspondences, we obtain the complete description of all integers $n$ with $F_{2k}\in Z(n)\cap CG(n)$.

\begin{thm}\label{mainthm}
For $k\ge 1$, the set of positive integers that have $F_{2k}$ in both Zeckendorf and Chung-Graham decompositions is 
$$\{nF_{2k}+\lfloor(n-1)\phi\rfloor F_{2k+1} + j\ :\ n\in \mathbb{N},\, 0\leq j\leq F_{2k-1}-1\}$$
where $\phi = (1+\sqrt{5})/2$, the golden ratio. 
\end{thm}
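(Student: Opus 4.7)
The strategy is to decouple the two decomposition conditions, characterize their compatibility through the Zeckendorf structure above $F_{2k}$, and then reconcile with the Beatty-like parametrization. By Zeckendorf's theorem, integers $N$ having $F_{2k}$ in their Zeckendorf decomposition are in bijection with pairs $(T,j)$, where $T$ is either $0$ or a sum of nonadjacent Fibonacci numbers $F_i$ with $i\geq 2k+2$ (the Zeckendorf \emph{top}) and $0\leq j\leq F_{2k-1}-1$ parametrizes the Zeckendorf \emph{tail} below $F_{2k}$; under this bijection $N=T+F_{2k}+j$. The tail $j$ is free, so the problem reduces to characterizing which tops $T$ also produce an $N$ having $F_{2k}$ in its Chung--Graham decomposition.

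The key compatibility lemma I will prove is: $N=T+F_{2k}+j$ has $F_{2k}$ in its Chung--Graham decomposition if and only if $T=0$ or the smallest-indexed Fibonacci in the Zeckendorf of $T$ is even-indexed. The proof is an induction on the Zeckendorf structure of $T$, tracking the Chung--Graham greedy algorithm. The pivotal identities are $2F_{2k+2m}=F_{2k+2m+1}+F_{2k+2m-2}$ and $F_{2k+2m+1}+F_{2k}=F_{2k+2m}+F_{2k+2m-2}+\cdots+F_{2k+4}+2F_{2k+2}$. If the smallest Fibonacci in $T$ is $F_{2k+2m}$ (even), the greedy arrives at remainder $F_{2k+2m}+F_{2k}+j$ at step $k+m$; since $2F_{2k+2m}>F_{2k+2m}+F_{2k}+j$ (using $j<F_{2k-1}<F_{2k+1}$), it assigns $c_{k+m}=1$ and leaves residue $F_{2k}+j\in[F_{2k},2F_{2k})$, forcing $c_k=1$. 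If instead the smallest is $F_{2k+2m+1}$ (odd), the second identity rewrites the lower portion of $N$ so that greedy assigns a double coefficient $c_{k+1}=2$, absorbing $F_{2k}$ and dropping the residue strictly below $F_{2k}$, so $c_k=0$.

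Let $\mathcal{G}_k$ denote the set of ``good tops'' identified by the lemma. The last and most delicate step is to show $\mathcal{G}_k=\{G(m):m\geq 0\}$, where $G(m):=mF_{2k}+\lfloor m\phi\rfloor F_{2k+1}$. A direct calculation yields
\[ G(m+1)-G(m)\;=\;F_{2k}+\bigl(\lfloor(m+1)\phi\rfloor-\lfloor m\phi\rfloor\bigr)F_{2k+1}\;\in\;\{F_{2k+2},\,F_{2k+3}\}, \]
with the choice dictated by the Fibonacci word associated to the Beatty sequence $\lfloor m\phi\rfloor$. Independently, enumerating $\mathcal{G}_k$ in increasing order from $0$, the ``next good top'' operation is governed by a parallel dichotomy: when $T=0$ or the smallest Fibonacci in $T$ is $\geq F_{2k+4}$, the next good top is $T+F_{2k+2}$ (gap $F_{2k+2}$); when the smallest Fibonacci in $T$ is exactly $F_{2k+2}$, the next good top is $T+F_{2k+3}$, obtained via the Zeckendorf identity $F_{2k+2}+F_{2k+3}=F_{2k+4}$ with possible cascades from $2F_{2k+2i}=F_{2k+2i+1}+F_{2k+2i-2}$ (gap $F_{2k+3}$). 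Matching the two dichotomies reduces to the inductive claim that the smallest Fibonacci in the Zeckendorf of $G(m)$ equals $F_{2k+2}$ if and only if the Beatty increment at $m$ equals $2$, i.e., $\{m\phi\}\geq 2-\phi$.

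Combining the three steps yields that $N\in\mathcal{Z}_k\cap\mathcal{C}_k$ if and only if $N=G(n-1)+F_{2k}+j=nF_{2k}+\lfloor(n-1)\phi\rfloor F_{2k+1}+j$ for some $n\in\mathbb{N}$ and $0\leq j\leq F_{2k-1}-1$, which is the formula in the theorem. The main obstacle will be the synchronization in the third step between the cascading Zeckendorf carries inside $\mathcal{G}_k$ and the Fibonacci-word pattern of $\lfloor m\phi\rfloor$; an equivalent route is to verify both sides satisfy the same recurrence by showing that the ``next good top'' difference at $G(m)$ agrees with the Beatty increment at $m$.
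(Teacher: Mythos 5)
Your overall architecture---write $N=T+F_{2k}+j$ with the tail $0\le j\le F_{2k-1}-1$ free, characterize the admissible tops $T$ by the parity of their smallest Zeckendorf index, and then match the good tops with $G(m)=mF_{2k}+\lfloor m\phi\rfloor F_{2k+1}$---is sound, and your compatibility lemma is a true statement (it is equivalent to the paper's Theorem \ref{thmZtoCG} combined with the classical fact that the golden string has a $B$ at position $m$ exactly when the smallest index in the Zeckendorf decomposition of $m$ is even). However, the proof you sketch for that lemma rests on a description of the Chung--Graham greedy that is false as soon as the top has more than one summand. Take $T=F_{2k+5}+F_{2k+2}$, so $m=1$: for $N=F_{2k+5}+F_{2k+2}+F_{2k}+j$ the greedy selects coefficient $2$ at index $2k+4$, because $2F_{2k+4}=F_{2k+5}+F_{2k+2}<N$, and the remainder jumps straight down to $F_{2k}+j$; it never equals $F_{2k+2}+F_{2k}+j$, so the asserted invariant ``the greedy arrives at remainder $F_{2k+2m}+F_{2k}+j$ at step $k+m$'' fails. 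The odd branch has the same defect: for $N=F_{2k}+F_{2k+3}+F_{2k+5}$ the greedy output is $2F_{2k+4}+F_{2k+2}$, so the double coefficient sits at index $2k+4$, not at $2k+2$ as you claim. In both examples the conclusion about the coefficient of $F_{2k}$ happens to be correct, but your argument implicitly assumes the greedy digests the top independently of what lies below it, and that is exactly what breaks. A correct proof needs a genuine induction (say on the largest Zeckendorf summand of $T$) showing that adding $F_{2k+\ell}$ to a number whose Chung--Graham decomposition tops out at $F_{2k+2h}$ only disturbs coefficients at indices $\ge 2k+2h$, resolved through the cases $c_h=1$ and $c_h=2$ via your two identities; this is the substantive work, and it is what the paper's Cases II and III in Theorem \ref{thmZtoCG} carry out.

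The third step is also not yet a proof. You still must show (i) that in your dichotomy there is no good top strictly between $T$ and $T+F_{2k+2}$, respectively $T+F_{2k+3}$ (the cascades $2F_{2k+2i}=F_{2k+2i+1}+F_{2k+2i-2}$ make the second branch nontrivial), and (ii) the synchronization claim that the smallest Zeckendorf summand of $G(m)$ equals $F_{2k+2}$ precisely when $\lfloor(m+1)\phi\rfloor-\lfloor m\phi\rfloor=2$, which you yourself flag as the main obstacle and leave open. Claim (ii) is essentially the classical Zeckendorf-parity description of the Fibonacci word (equivalently, that the positions of $B$ are $\lfloor j\phi\rfloor$, i.e., the paper's \eqref{Bform} together with Lemma \ref{propertyS}); it is true, but it must be proved or cited, since without it the identification of the good tops with $\{G(m):m\ge 0\}$---and hence the theorem---is not established. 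So the proposal is a reasonable plan, parallel in spirit to the paper's golden-string argument, but both of its pivotal steps currently contain genuine gaps.
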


Our paper is structured as follows. Section \ref{conversion} demonstrates the conversions between the two decompositions. Section \ref{duality} proves the duality of correspondences, namely Theorems \ref{thmZtoCG} and \ref{dual}. Finally, Section \ref{proofmt} is devoted to prove Theorem \ref{mainthm}. The proofs of the conversion statements in Section \ref{conversion} are found in Appendix \ref{apencon}.

\section{Conversion between the two decompositions}\label{conversion}

We discuss conversions between the two decompositions. Each conversion result can be proved by induction on the number of terms. In this section, however, we emphasize the mechanics of the conversions, while their proofs are provided in Appendix \ref{apencon} for interested readers.

Let us use the subscript tags $\mbox{ }\lowersub{\alpha Z}$ and $\mbox{ }\lowersub{\alpha CG}$ to indicate that a decomposition is a Zeckendorf or Chung-Graham decomposition, respectively; for example, 
$$F_{2} + F_4 + 2F_8 + F_{10} + 2F_{14}\mbox{ }\lowersub{\alpha CG}\ =\ F_4 + F_7 + F_{10} + F_{12} + F_{15}\mbox{ }\lowersub{\alpha Z}.$$

\begin{lem}\label{l1}
    For $N\in \mathbb{N}$, let $\sum_{m=1}^Na_mF_{2k+2m+1}$ be a Zeckendorf decomposition with $a_N = 1$. Then there exist coefficients $b$ and $(b_m)_{m=2}^{N}$ with $b, b_N\in \{1,2\}$ such that
    \begin{equation}\label{e1}F_{2k} + \sum_{m=1}^N a_mF_{2k+2m+1}\mbox{ }\lowersub{\alpha Z}\ =\ bF_{2k+2} + \sum_{m=2}^N b_mF_{2k+2m}\mbox{ }\lowersub{\alpha CG}.\end{equation}
\end{lem}

\begin{cor}[Conversion to Chung-Graham decompositions]\label{toCG} Let $r\ge 1$ and $N\ge 2r$. 
Given each Zeckendorf decomposition below, there exist coefficients $b\in \{1,2\}$ and $(b_m)_{2\le m\le 2\lfloor N/2\rfloor}$ such that
\begin{equation}\label{e3}
    F_{2k} + F_{2k+2r+1} + \sum_{m=2r+3}^N a_m F_{2k+m}\mbox{ }\lowersub{\alpha Z}\ =\ bF_{2k+2} + \sum_{m=4}^{2\lfloor N/2\rfloor} b_m F_{2k+m}\mbox{ }\lowersub{\alpha CG}
\end{equation}
and 
\begin{equation}\label{e4}
    F_{2k} + F_{2k+2r} + \sum_{m=2r+2}^N a_m F_{2k+m}\mbox{ }\lowersub{\alpha Z}\ =\ F_{2k} + \sum_{m=2}^{2\lfloor N/2\rfloor} b_m F_{2k+m}\mbox{ }\lowersub{\alpha CG}.
\end{equation}
\end{cor}

\begin{lem}[Conversion to Zeckendorf decompositions]\label{toZeck0}
    Let $M\ge 0$ be even and $b_M\in \{1,2\}$. Given each Chung-Graham decomposition below, there exist coefficients $(a_m)_{m=1}^{M+1}$ such that
    \begin{equation}\label{e6}
    2F_{2k} + \sum_{m=2}^M b_m F_{2k+m}\mbox{ }\lowersub{\alpha CG}\ =\ F_{2k-2} + \sum_{m=1}^{M+1} a_m F_{2k+m} \mbox{ }\lowersub{\alpha Z}
    \end{equation}
    and     
    \begin{equation}\label{e7}
    F_{2k} + \sum_{m=2}^M b_m F_{2k+m}\mbox{ }\lowersub{\alpha CG}\ =\ \begin{cases}F_{2k-2} + \sum_{m=1}^{M+1} a_m F_{2k+m}\mbox{ }\lowersub{\alpha Z},\\ F_{2k} + \sum_{m=1}^{M+1} a_m F_{2k+m}\mbox{ }\lowersub{\alpha Z}.\end{cases}
    \end{equation}
    In the case $k = 1$, we remove $F_{2k-2} = 0$ from the Zeckendorf decomposition.
\end{lem}

\section{The golden string and duality}\label{duality}
In this section, we establish the correspondence between the golden string $\mathcal{S}$ and the integers $p(j)\in A_{2k}$ with $\min CG(p(j)) = F_{2k}$. An analog of the correspondence holds between $\mathcal{S}$ and the integers $q(j)\in B_{2k}$ with $\min Z(q(j)) = F_{2k}$.

\begin{lem} \cite[cf.\ Lemma 3.2]{G11}\label{prelem}
Let $n = F_c + \sum_{m=c+2}^N a_m F_m\mbox{ }\lowersub{\alpha Z}$, i.e., $\min Z(n) = c$. Then $\mathcal{S}(n) = B$ if and only if $c$ is even.    
\end{lem}

For each $j\ge 2$, write $p(j) = F_{2k} + \sum_{m=2}^\infty a_m F_{2k+m}\mbox{ }\lowersub{\alpha Z}$ and define the function $p^*: \mathbb{N}_{\ge 2}\rightarrow \mathbb{N}$ as $p^*(j) := \sum_{m=2}^\infty a_m F_m$.

\begin{prop}\label{propp*}
For $j\ge 2$, we have $p^*(j) = j-1$.    
\end{prop}

\begin{proof}
Note that $p^*$ is surjective because sums of nonadjacent Fibonacci numbers in $(F_n)_{n\ge 2}$ give all natural numbers.
We prove that $p^*$ is strictly increasing. 
Let $j < j'$ with 
$$p(j)\ =\ F_{2k} + \sum_{m=2}^\infty a_m F_{2k+m}\mbox{ }\lowersub{\alpha Z}\mbox{ and }p(j')\ =\ F_{2k} + \sum_{m=2}^{\infty} a'_m F_{2k+m}\mbox{ }\lowersub{\alpha Z}.$$
Let $r$ be the largest index such that $a'_r\neq a_r$. The lexicographical order of Zeckendorf decompositions implies that $a'_r = 1$ and $a_r = 0$, and thus, 
$$p^*(j)\ =\ \sum_{m=2}^\infty a_m F_m \ <\ \sum_{m=2}^\infty a'_m F_{m}\ =\ p^*(j').$$
It follows that $p^*(j) = j-1$.
\end{proof}

\begin{proof}[Proof of Theorem \ref{thmZtoCG}]
  The second statement follows from Corollary \ref{toCG}. We prove the first statement. By Corollary \ref{toCG}, $F_{2k}\in CG(p(j))$ if and only if 
  $$p(j)\ =\ F_{2k} + F_{2k+2r} + \sum_{m=2r+2}^\infty a_m F_{2k+m}\mbox{ }\lowersub{\alpha Z},$$
  which gives
  $$p^*(j)\ =\ F_{2r} + \sum_{m=2r+2}^\infty a_m F_m.$$
  By Proposition \ref{propp*}, 
  $$j - 1 \ =\ F_{2r} + \sum_{m=2r+2}^\infty a_m F_m.$$
  By Lemma \ref{prelem},
  $$\mathcal{S}(j-1) \ =\ B.$$
\end{proof}

To prove Theorem \ref{dual}, we need the analog of Proposition \ref{propp*} for $q(j)\in B_{2k}$, which is obtained with the help of the following result. 

\begin{cor}\label{corq} For each $j\ge 2$, there exists $r\ge 1$ such that
$$q(j)\ =\ \begin{cases}F_{2k} + F_{2k+2r} + \sum_{m=2r+2}^\infty a_m F_{2k+m}\mbox{ }\lowersub{\alpha Z},\\ F_{2k-2} + F_{2k+2r-1} + \sum_{m=2r+1}^\infty a_m F_{2k+m}\mbox{ }\lowersub{\alpha Z}.\end{cases}$$   
\end{cor}

\begin{proof}
    By Lemma \ref{toZeck0}, we know that either 
    $$q(j) \ =\ F_{2k} + \sum_{m=2}^{\infty} a_m F_{2k+m}\mbox{ }\lowersub{\alpha Z}\quad\mbox{ or }\quad q(j) \ =\ F_{2k-2} + \sum_{m=1}^{\infty} a_m F_{2k+m}\mbox{ }\lowersub{\alpha Z}.$$

    Case 1: $q(j) = F_{2k} + \sum_{m=2}^{\infty} a_m F_{2k+m}\mbox{ }\lowersub{\alpha Z}$. Since $j\ge 2$, we have $q(j) > F_{2k}$, so $\sum_{m=2}^{\infty} a_m F_{2k+m} > 0$. Let $m^* = \min \{m\ge 2: a_m\neq 0\}$. By Corollary \ref{toCG}, $m^*$ is even, namely $m^* = 2r$ for some $r\ge 1$. In this case,
    $$q(j)\ =\ F_{2k} + F_{2k+2r} + \sum_{m=2r+2}^\infty a_m F_{2k+m}\mbox{ }\lowersub{\alpha Z}.$$

    Case 2: $q(j) = F_{2k-2} + \sum_{m=1}^{\infty} a_m F_{2k+m}\mbox{ }\lowersub{\alpha Z}$. Let $m^* = \min \{m\ge 1:a_m\neq 0\}$. Suppose, for a contradiction, that $m^* = 2r$ for some $r\ge 1$. Then 
    $$q(j) \ =\ F_{2k-2} + F_{2k+2r} +  \sum_{m=2r+2}^{\infty} a_m F_{2k+m}\mbox{ }\lowersub{\alpha Z}.$$
    \begin{enumerate}
        \item[] Case 2.1: If $k\ge 2$, then according to \eqref{e4}, we have $\min CG(q(j)) = F_{2k-2}$, which contradicts that $q(j)\in B_{2k}$.
        \item[] Case 2.2: If $k = 1$, then $\min Z(q(j)) = F_{2k+2r}$. By Corollary \ref{toCG}, $\min CG(q(j))$ is either $F_{2k+2r}$ or $F_{2k+2r+2}$, contradicting $q(j)\in B_{2k}$.
    \end{enumerate}
    Therefore, $m^* = 2r-1$ for some $r\ge 1$, and we have
    $$q(j)\ =\ F_{2k-2} + F_{2k+2r-1} + \sum_{m=2r+1}^{\infty} a_m F_{2k+m}\mbox{ }\lowersub{\alpha Z}.$$
\end{proof}

It follows from Corollary \ref{corq} that each $q(j)\in B_{2k}$ can be written as 
\begin{equation}\label{re22}q(j)\ =\  c_1 F_{2k-2} + c_2F_{2k} + \sum_{m=1}^\infty a_m F_{2k+m}\mbox{ }\lowersub{\alpha Z},\end{equation}
where $c_1, c_2\in \{0,1\}$ and $c_1 + c_2 = 1$.
Define the function $q^*: \mathbb{N}_{\ge 2}\rightarrow \mathbb{N}$ as 
$$q^*(j) \ :=\ \sum_{m=1}^N a_m F_{m+1}.$$

We have the analog of Proposition \ref{propp*}.
\begin{prop}\label{propq*}
For $j\ge 2$, $q^*(j) = j-1$.    
\end{prop}

\begin{proof}
First, we prove that $q^*$ is surjective. Let $n\in \mathbb{N}$ with $n = \sum_{m=1}^\infty a_m F_{m+1}\mbox{ }\lowersub{\alpha Z}$. Let $m^* = \min \{m\ge 1: a_m\neq 0\}$. 

Case 1: $m^*$ is odd. By \eqref{e3}, 
$$F_{2k-2} + F_{2k+m^*} + \sum_{m=m^*+2}^\infty a_m F_{2k+m}\mbox{ }\lowersub{\alpha Z} \ \in\ B_{2k}.$$

Case 2: $m^*$ is even. By \eqref{e4}, 
$$F_{2k} + F_{2k+m^*} + \sum_{m = m^*+2}^\infty a_m F_{2k+m}\mbox{ }\lowersub{\alpha Z}\ \in\ B_{2k}.$$

We see from both cases that exists $j\ge 2$ with 
$$q^*(j) \ =\ F_{m^*+1} + \sum_{m=m^*+2}^\infty a_m F_{m+1}\ =\ n.$$

Second, we prove that $q^*$ is strictly increasing. Let $2\le j < j'$. According to \eqref{re22}, 
we write
$$q(j)\ =\ c_1F_{2k-2} + c_2F_{2k} + \sum_{m=1}^\infty a_m F_{2k+m}\mbox{ }\lowersub{\alpha Z}$$
and
$$q(j')\ =\ c'_1F_{2k-2} + c'_2F_{2k} + \sum_{m=1}^\infty a'_m F_{2k+m} \mbox{ }\lowersub{\alpha Z}.$$

If $\sum_{m=1}^\infty a_m F_{2k+m} =  \sum_{m=1}^\infty a'_m F_{2k+m}$, then $q(j) < q(j')$ implies that $(c_1, c_2) = (1,0)$ and $(c'_1, c'_2) = (0,1)$. By Corollary \ref{corq}, 
$\min \{m\ge 1: a_m = 1\}$ is odd, while $\min \{m\ge 1: a'_m  = 1\}$ is even. This contradicts that $\sum_{m=1}^\infty a_m F_{2k+m} =  \sum_{m=1}^\infty a'_m F_{2k+m}$.

If $\sum_{m=1}^\infty a_m F_{2k+m} \neq  \sum_{m=1}^\infty a'_m F_{2k+m}$, then $q(j) < q(j')$ implies that if $M$ is the largest integer such that $a_M\neq a'_M$, then $a'_M = 1$ and $a_M = 0$.
It follows that $q^*(j') > q^*(j)$.
\end{proof}

\begin{proof}[Proof of Theorem \ref{dual}]
    The second statement follows from Corollary \ref{corq}. We prove the first statement. By Corollary \ref{corq}, $F_{2k}\in Z(q(j))$ if and only if 
    $$q(j)\ =\ F_{2k} + F_{2k+2r} + \sum_{m=2r+2}^\infty a_m F_{2k+m}\mbox{ }\lowersub{\alpha Z},$$
    which, by Proposition \ref{propq*}, gives
    $$j-1 \ =\ q^*(j)\ =\ F_{2r+1} + \sum_{m=2r+2}^\infty a_mF_{m+1}.$$
    By Lemma \ref{prelem}, $\mathcal{S}(j-1) = A$.
\end{proof}

\begin{rek}\normalfont
As we see from the proof of Theorems \ref{thmZtoCG} and \ref{dual}, the index $m+1$ in the definition of $q^*$ (instead of $m$ as in the definition of $p^*$) explains why integers in $B_{2k}$ correspond to $A$'s in $\mathcal{S}$, while integers in $A_{2k}$ correspond to $B$'s in $\mathcal{S}$. 
\end{rek}

\section{Integers that have $F_{2k}$ in both decompositions}\label{proofmt}

Let $C_{2k} = A_{2k} \cap B_{2k}$. Define $r(j)$ to be the $j$th smallest number in $C_{2k}$. Let $\beta(j)$ denote the position of the $j$th appearance of the letter $B$ in $\mathcal{S}$.
We record several useful properties of $\mathcal{S}$ to be used in due course.

\begin{lem} \label{propertyS} Let $n\ge 1$.
\begin{enumerate}
    \item\label{ps8} If $\mathcal{S}(n) = A$, then $\mathcal{S}(n+1) = B$. 
    \item\label{ps2} The number of $B$'s in the first $n$ letters of $\mathcal{S}$ is $\left\lfloor \frac{n+1}{\phi}\right\rfloor$ (\cite[Lemma 3.3]{G11}).
    \item\label{ps3} The $n$th $B$ of $\mathcal{S}$ occurs at position $\lfloor n\phi\rfloor$, i.e., $\beta(n) = \lfloor n\phi\rfloor$ (\cite[Lemma 3.4]{G11}).
\end{enumerate}
\end{lem}

\begin{proof}
    Item \eqref{ps8} is easily proved by induction, while interested readers may refer to \cite{G11} for the proof of items \eqref{ps2} and \eqref{ps3}.
\end{proof}

Next, we have the correspondence between terms in $A_{2k}$ and letters of $\mathcal{S}$ and establish its analog for $C_{2k}$.

\begin{lem}\cite[Lemma 3.2]{G14}\label{lemma3.2G}
    For $j\ge 1$,
    $$p(j+1)-p(j)\ =\ \begin{cases}F_{2k+1}, \mbox{ if }A\mbox{ is the }j\mbox{th character of }\mathcal{S};\\ F_{2k+2}, \mbox{ if }B\mbox{ is the }j\mbox{th character of }\mathcal{S}.\end{cases}$$
\end{lem}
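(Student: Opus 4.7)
The plan is to prove the lemma by strong induction on $j$, matching the self-similar block structure of the Zeckendorf table (Theorem \ref{ThmZTable}) with the corresponding self-similarity of the golden string $\mathcal{S}$ (Lemma \ref{propertyS}(\ref{ps4})). The base case $j=1$ is immediate: $p(1)=F_{2k}$ and $p(2)=F_{2k}+F_{2k+2}$, so $p(2)-p(1)=F_{2k+2}$, matching the fact that the first letter of $\mathcal{S}$ is $B$.

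For the inductive step, assuming the claim for all indices less than $j$, let $\ell\ge 2$ be the unique integer with $F_\ell+1\le j\le F_{\ell+1}$. Theorem \ref{ThmZTable} gives $p(i)=p(i-F_\ell)+F_{2k+\ell}$ for every $i$ in this range. In the \emph{interior case} $F_\ell+1\le j\le F_{\ell+1}-1$, this yields $p(j+1)-p(j)=p(j-F_\ell+1)-p(j-F_\ell)$, and since $1\le j-F_\ell<j$, the inductive hypothesis identifies this difference with $F_{2k+1}$ or $F_{2k+2}$ according to the $(j-F_\ell)$th letter of $\mathcal{S}$. By Lemma \ref{propertyS}(\ref{ps4}) with $n=\ell-1$, that letter agrees with the $j$th letter of $\mathcal{S}$, completing this case.

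The delicate case is the \emph{boundary} $j=F_{\ell+1}$. Here $p(j+1)$ is the first entry of the next block, so $p(j+1)=p(1)+F_{2k+\ell+1}=F_{2k}+F_{2k+\ell+1}$. Iterating the recursion $p(F_{m+1})=p(F_{m-1})+F_{2k+m}$ for $m\ge 2$ (also extracted from Theorem \ref{ThmZTable}) down to the bases $p(F_1)=p(F_2)=F_{2k}$ gives
\[
p(F_{\ell+1})\ =\ \begin{cases} F_{2k}+F_{2k+2}+F_{2k+4}+\cdots+F_{2k+\ell}, & \ell\text{ even},\\ F_{2k}+F_{2k+3}+F_{2k+5}+\cdots+F_{2k+\ell}, & \ell\text{ odd}. \end{cases}
\]
Applying the telescoping identity $F_a+F_{a+2}+\cdots+F_{a+2(r-1)}=F_{a+2r-1}-F_{a-1}$ with $a=2k+2$ (respectively $a=2k+3$) then simplifies $p(F_{\ell+1}+1)-p(F_{\ell+1})$ to $F_{2k+1}$ when $\ell$ is even and to $F_{2k+2}$ when $\ell$ is odd. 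By Lemma \ref{propertyS}(\ref{ps3}), the $F_{\ell+1}$th letter of $\mathcal{S}$ is $A$ precisely when $\ell$ is even and $B$ precisely when $\ell$ is odd, exactly matching the computed differences.

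The main obstacle will be the boundary case: one must pin down the explicit closed form for $p(F_{\ell+1})$ and then invoke the correct parity-dependent Fibonacci telescoping. The interior case comes essentially for free once Theorem \ref{ThmZTable} is aligned with the self-similarity of $\mathcal{S}$ captured by Lemma \ref{propertyS}(\ref{ps4}).
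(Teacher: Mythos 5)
Your proof is correct. Note, however, that the paper does not prove this lemma at all: it is quoted verbatim from Griffiths \cite[Lemma 3.2]{G11}, so there is no in-paper argument to compare against. What you have produced is a self-contained proof whose architecture closely parallels the induction the paper uses later for Theorem \ref{thmZtoCG}: split indices into the blocks $F_\ell+1\le j\le F_{\ell+1}$ coming from Theorem \ref{ThmZTable}, handle interior indices by transferring the difference to $p(j-F_\ell+1)-p(j-F_\ell)$ and matching letters via the self-similarity in Lemma \ref{propertyS}\eqref{ps4}, and handle block boundaries by an explicit parity computation keyed to Lemma \ref{propertyS}\eqref{ps3}. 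All the steps check out: the recursion $p(j)=p(j-F_\ell)+F_{2k+\ell}$ is exactly what Theorem \ref{ThmZTable} gives; in the interior case $1\le j-F_\ell\le F_{\ell-1}-1$ so the inductive hypothesis and item \eqref{ps4} (with $n=\ell-1\ge 3$, since the interior range is empty for $\ell\le 3$) apply; and at the boundary your closed form $p(F_{\ell+1})=F_{2k}+F_{2k+2}+\cdots+F_{2k+\ell}$ ($\ell$ even) or $F_{2k}+F_{2k+3}+\cdots+F_{2k+\ell}$ ($\ell$ odd), combined with $p(F_{\ell+1}+1)=F_{2k}+F_{2k+\ell+1}$ and the telescoping identity $F_a+F_{a+2}+\cdots+F_{a+2(r-1)}=F_{a+2r-1}-F_{a-1}$, yields $F_{2k+1}$ and $F_{2k+2}$ in the respective parities, agreeing with the $A$/$B$ value of the $F_{\ell+1}$\textsuperscript{th} letter. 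The citation keeps the paper short; your argument would make the exposition self-contained and is essentially the same style of proof as Griffiths' original and as the paper's own proof of Theorem \ref{thmZtoCG}.
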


\begin{lem} \label{condiff} For $j\ge 2$,
$$r(j+1) - r(j)\ =\ \begin{cases}F_{2k+2},\mbox{ if }B\mbox{ is the }(\beta(j)-1)\mbox{th}\mbox{ letter of }\mathcal{S};\\ F_{2k+3}, \mbox{ if }A\mbox{ is the }(\beta(j)-1)\mbox{th}\mbox{ letter of }\mathcal{S}.\end{cases}$$
\end{lem}

\begin{proof}
    By Theorem \ref{thmZtoCG}, we have
$$r(j+1)\ =\ p(\beta(j)+1)\mbox{ for all }j\ge 1.$$
Hence, 
$$r(j+1) - r(j)\ = \ p(\beta(j)+1) - p(\beta(j-1)+1)\mbox{ for all }j\ge 2.$$

Case 1: If $\mathcal{S}(\beta(j)-1) = B$, then 
$\beta(j-1) = \beta(j)-1$. By Lemma \ref{lemma3.2G}, 
$$r(j+1) - r(j)\ =\ p(\beta(j)+1)-p(\beta(j))\ =\ F_{2k+2}.$$

Case 2: If $\mathcal{S}(\beta(j)-1) = A$, then 
Lemma \ref{propertyS}, item \eqref{ps8} gives $\beta(j-1) = \beta(j)-2$. By Lemma \ref{lemma3.2G}, we have 
\begin{align*}
    r(j+1) - r(j)&\ =\ p(\beta(j)+1)-p(\beta(j)-1)\\
    &\ =\ p(\beta(j)+1) - p(\beta(j)) + p(\beta(j)) - p(\beta(j)-1)\\
    &\ =\ F_{2k+2} + F_{2k+1}\ =\ F_{2k+3}. 
\end{align*}
\end{proof}

\begin{proof}[Proof of Theorem \ref{mainthm}]
    For $n\ge 1$, let $a(n)$ denote the number of $A$'s in the first $n$ letters of $\mathcal{S}$. According to Lemma \ref{condiff}, for $j\ge 2$, the difference $r(j+1)-r(j)$ depends on whether the $j$th appearance of $B$ is after an $A$ or a $B$ in $\mathcal{S}$: in the case of $AB$, $r(j+1) - r(j) = F_{2k+3}$, while in the case of $BB$, $r(j+1)-r(j) = F_{2k+2}$. Therefore, the number of times that $r(j+1) - r(j) = F_{2k+3}$ is equal to the number of $A$'s in the first $\beta(n)$ letters of $\mathcal{S}$. By Lemma \ref{propertyS}, item \eqref{ps3}, we have for $n\ge 1$, 
\begin{align*}
    r(n+1)&\ =\ a(\beta(n))F_{2k+3} + (n-a(\beta(n))-1)F_{2k+2} + r(2)\\
    &\ =\ a(\lfloor n\cdot \phi\rfloor)F_{2k+3} + (n-a(\lfloor n\cdot \phi\rfloor)-1)F_{2k+2} + r(2)\\
    &\ =\ a(\lfloor n\cdot \phi\rfloor)F_{2k+1} + (n-1)F_{2k+2} + F_{2k} + F_{2k+2}\\
    &\ =\ F_{2k} + a(\lfloor n\cdot \phi\rfloor)F_{2k+1} + nF_{2k+2}. 
\end{align*}
Hence, 
\begin{equation*}
    C_{2k}\ =\ \{F_{2k}+a\left(\lfloor(n-1)\cdot \phi\rfloor\right)F_{2k+1}+(n-1)F_{2k+2}\ :\ n\in \mathbb{N}\}.
\end{equation*}
It follows from Lemma \ref{propertyS}, item \eqref{ps2} that 
\begin{align*}
a\left(\lfloor(n-1)\cdot \phi\rfloor\right)&\ =\ \lfloor(n-1)\cdot \phi\rfloor - \left\lfloor \frac{\lfloor(n-1)\cdot \phi\rfloor+1}{\phi}\right\rfloor
&\ =\ \lfloor(n-1)\cdot \phi\rfloor - (n-1),
\end{align*}
because
$$n-1\ =\ \bigg\lfloor\dfrac{(n-1)\phi}{\phi}\bigg\rfloor\ \le\ \bigg\lfloor\dfrac{\lfloor(n-1)\phi\rfloor+1}{\phi}\bigg\rfloor\ \le\ \bigg\lfloor\dfrac{(n-1)\phi+1}{\phi}\bigg\rfloor\ =\ n-1.$$
As a result,
$$C_{2k}\ =\ \{nF_{2k}+\lfloor(n-1)\phi\rfloor F_{2k+1}\ :\ n\in \mathbb{N}\}.$$

Using $C_{2k}$, we now find the set $I_{2k}$ consisting of all positive integers $n$ with $F_{2k} \in Z(n)\cap CG(n)$. Pick $n\in \mathbb{N}$ whose Zeckendorf decomposition is 
$$n\ =\ \underbrace{\sum_{i=2}^{2k-2}c_iF_{i}}_{=: L}+\underbrace{\sum_{i=2k}^\infty c_iF_{i}}_{=: R}\mbox{ }\lowersub{\alpha Z}.$$
By Corollary \ref{toCG}, 
$\max CG(L) \le F_{2k-2}$ and $\min CG(R) \ge F_{2k}$. Let 
$$L\ =\ \sum_{i=1}^{k-1} d_i F_{2i}\mbox{ }\lowersub{\alpha CG}\quad \mbox{ and }\quad R\ =\ \sum_{i=k}^{\infty} d_i F_{2i}\mbox{ }\lowersub{\alpha CG}.$$

We claim that 
$$n \ =\  \underbrace{\sum_{i=1}^{k-1} d_i F_{2i}}_{L} + \underbrace{\sum_{i=k}^\infty d_i F_{2i}}_R$$
is the Chung-Graham decomposition of $n$. Suppose not; then there exist $j_1\le k-1$ and $j_2\ge k$ such that
$$d_{j_1}\ = \ d_{j_2}\ =\ 2\quad \mbox{ and }\quad d_{j_1+1} \ =\ d_{j_1 + 2} \ =\ \cdots \ =\ d_{j_2 - 1} \ =\ 1.$$ 
However, this implies
$$L\ \ge\ 2F_{2j_1} + \sum_{i = j_1 + 1}^{k-1} F_{2i}\ =\ F_{2k-1} + F_{2j_1} - F_{2j_1 -1}\ \ge\ F_{2k-1},$$
which contradicts $L = \sum_{i=2}^{2k-2} c_i F_{i}\mbox{ }\lowersub{\alpha Z}$. 

Therefore, $F_{2k}\in Z(n)\cap CG(n)$ if and only if $F_{2k}\in Z(R)\cap CG(R)$, which is the same as $R\in C_{2k}$. In other words, $n\in I_{2k}$ if and only if $R\in C_{2k}$. This gives
$$I_{2k} \ =\ \{nF_{2k}+\lfloor(n-1)\phi\rfloor F_{2k+1} + j\ :\ n\in \mathbb{N},\, 0\leq j\leq F_{2k-1}-1\},$$
as claimed. 
\end{proof}

\appendix

\section{Proofs of conversion between the two decompositions}\label{apencon}
\begin{proof}[Proof of Lemma \ref{l1}]
We proceed by induction. Let $u:=F_{2k} + \sum_{m=1}^N a_mF_{2k+2m+1}$. For $N = 1$, we have 
$u = F_{2k} + F_{2k+3}\ =\ 2F_{2k+2}$. Suppose that \eqref{e1} is true for all $N\le j$ for some $j\ge 1$. We show that \eqref{e1} is true for $N = j+1$. 

Case 1: $a_m = 0$ for each $m\in [1,j]$. We have
$$u \ =\ F_{2k}+ F_{2k+2j+3} \ =\ 2F_{2k+2} + \sum_{m=2}^{j+1} F_{2k+2m}\mbox{ }\lowersub{\alpha CG}.$$

Case 2: $a_m \neq 0$ for some $m\in [1,j]$. Let $j^* = \max\{1\le m\le j: a_m = 1\}$.
By the inductive hypothesis, we have
\begin{align}\label{e2}
u\ =\ F_{2k} + \sum_{m=1}^{j+1}a_m F_{2k+2m+1}&\ =\ F_{2k} + \sum_{m=1}^{j^*} a_m F_{2k+2m+1} + F_{2k+2j+3}\nonumber\\
&\ =\ bF_{2k+2} + \sum_{m=2}^{j^*} b_mF_{2k+2m}\mbox{ }\lowersub{\alpha CG} + F_{2k+2j+3}.
\end{align}

\begin{enumerate}
    \item[] Case 2.1: $j^* = 1$. Then 
    \begin{align*}
    u \ =\ F_{2k} + F_{2k+3} + F_{2k+2j+3}&\ =\ F_{2k+2} + F_{2k+2} + F_{2k+2j+3}\\
    &\ =\ F_{2k+2} + 2F_{2k+4} + \sum_{m=3}^{j+1}F_{2k+2m}\mbox{ }\lowersub{\alpha CG}.
    \end{align*}
    \item[] Case 2.2: $j^*\ge 2$ and $b_{j^*} = 1$. From \eqref{e2}, we write $u$ as
    \begin{align*}
        &bF_{2k+2} + \sum_{m=2}^{j^*-1} b_mF_{2k+2m} + F_{2k+2j^*} + F_{2k+2j+3}\\
        \ =\ &bF_{2k+2} + \sum_{m=2}^{j^*-1} b_m F_{2k+2m} + 2F_{2k+2j^*+2} + \sum_{m = j^*+2}^{j+1} F_{2k+2m}\mbox{ }\lowersub{\alpha CG}.
    \end{align*}
    \item[] Case 2.3: $j^*\ge 2$ and $b_{j^*} = 2$. From \eqref{e2}, we write $u$ as
    \begin{align*}
        &bF_{2k+2} + \sum_{m=2}^{j^*-1} b_mF_{2k+2m} + 2F_{2k+2j^*} + F_{2k+2j+3}\\
        \ =\ &bF_{2k+2} + \sum_{m=2}^{j^*-1} b_mF_{2k+2m} + F_{2k+2j^*} + 2F_{2k+2j^*+2} +\sum_{m=j^*+2}^{j+1} F_{2k+2m}\mbox{ }\lowersub{\alpha CG}.
    \end{align*}
\end{enumerate}
We have verified \eqref{e1} for all cases. 
\end{proof}

\begin{proof}[Proof of Corollary \ref{toCG}]
First, we prove \eqref{e3}.
    If $2r\le N\le 2r+2$, then 
    $$
    F_{2k} + F_{2k+2r+1}\ =\ 2F_{2k+2} +  \sum_{m=2}^{r} F_{2k+2m},
    $$
    and \eqref{e3} is true.

    Suppose that $N\ge 2r+3$. Let $m_1 < m_2 < \cdots < m_s$ be all even integers in $[2r+3, N]$ with $a_{m_i} = 1$ for each $i\le s$. 
    Let $b_{i,j}$ denote the coefficients of the corresponding Chung-Graham decompositions. By Lemma \ref{l1}, we have
    \begin{align*}
        &F_{2k} + F_{2k+2r+1} + \sum_{m=2r+3}^N a_m F_{2k+m}\\
        \ =\ &F_{2k} + F_{2k+2r+1} + \sum_{m=2r+3}^{m_1-3} a_mF_{2k+m} + \sum_{j=1}^{s-1} \left(F_{2k+m_j} + \sum_{m=m_j+3}^{m_{j+1}-3} a_mF_{2k+m}\right)  \\
        &\ + F_{2k+m_s} + \sum_{m=m_s+3}^{N} a_m F_{2k+m}\\
        \ =\ & b_0F_{2k+2} + \sum_{m=2}^{m_1/2-2} b_{0, m} F_{2k+2m}\mbox{ }\lowersub{\alpha CG}  + \sum_{j=1}^{s-1} \sum_{m = m_j/2}^{m_{j+1}/2-2} b_{j,m} F_{2k+2m}\mbox{ }\lowersub{\alpha CG}\\
        &+ \sum_{m = m_s/2}^{\lfloor N/2\rfloor} b_{s,m}F_{2k+2m}\mbox{ }\lowersub{\alpha CG}\\
        \ =\ & bF_{2k+2} + \sum_{m=4}^{2\lfloor N/2\rfloor} b_m F_{2k+m}\mbox{ }\lowersub{\alpha CG}.
    \end{align*} 

Next, \eqref{e4} is the same as
\begin{equation}\label{e5}F_{2k+2r} + \sum_{m=2r+2}^N a_m F_{2k+m}\mbox{ }\lowersub{\alpha Z}\ =\  \sum_{m=2}^{2\lfloor N/2\rfloor} b_m F_{2k+m}\mbox{ }\lowersub{\alpha CG}.\end{equation}

If $2r\le N\le 2r+1$, then $2\lfloor N/2\rfloor = 2r$, and 
$$F_{2k+2r} + \sum_{m=2r+2}^N a_m F_{2k+m}\mbox{ }\lowersub{\alpha Z}\ =\ F_{2k+2r},$$
so \eqref{e5} is true. Suppose that $N\ge 2r + 2$. Let $m_1 < m_2 < \cdots < m_s$ be all even integers in $[2r+2, N]$ with $a_{m_i} = 1$ for each $i\le s$. 
Let $b_{i,j}$ denote the coefficients of the corresponding Chung-Graham decompositions.
By Lemma \ref{l1}, we have
\begin{align*}
    &F_{2k+2r} + \sum_{m=2r+2}^N a_m F_{2k+m}\\
    \ =\ &F_{2k+2r} + \sum_{m=2r+2}^{m_1-3} a_mF_{2k+m} + \sum_{j=1}^{s-1}\left(F_{2k+m_j} + \sum_{m=m_j+3}^{m_{j+1}-3} a_mF_{2k+m}\right) + \\
    &F_{2k+m_s} + \sum_{m= m_s + 3}^{N} a_m F_{2k+m}\\
    \ =\ &\sum_{m=r}^{m_1/2-2}b_{0,m} F_{2k+2m}\mbox{ }\lowersub{\alpha CG} + \sum_{j=1}^{s-1}\sum_{m = m_j/2}^{m_{j+1}/2-2} b_{j,m}F_{2k+2m}\mbox{ }\lowersub{\alpha CG} + \sum_{m = m_s/2}^{\lfloor N/2\rfloor} b_{s,m}F_{2k+2m}\mbox{ }\lowersub{\alpha CG}\\
    \ =\ &\sum_{m=2}^{2\lfloor N/2\rfloor} b_m F_{2k+m}\mbox{ }\lowersub{\alpha CG}.
\end{align*} 
\end{proof} 

\begin{proof}[Proof of Lemma \ref{toZeck0}, Equation \eqref{e6}]
     We proceed by induction. It is readily verified that \eqref{e6} holds for $M\in \{0, 2, 4\}$. Suppose that \eqref{e6} holds for $M = j$ for some even $j\ge 4$. We show that \eqref{e6} holds for $M = j+2$. We have
    \begin{align*}
        2F_{2k} + \sum_{m=2}^{j+2} b_m F_{2k+m}\mbox{ }\lowersub{\alpha CG} &\ =\ 2F_{2k} + \sum_{m=2}^j b_m F_{2k+m} + b_{j+2} F_{2k+j+2}\\
        & \ =\ F_{2k-2} + \sum_{m=1}^{j+1} a_m F_{2k+m}\mbox{ }\lowersub{\alpha Z} + b_{j+2} F_{2k+j+2}.
    \end{align*}

    Case 1: $b_{j+2} = 1$ and $a_{j+1} = 0$. Then 
    $$2F_{2k} + \sum_{m=2}^{j+2} b_m F_{2k+m}\ =\ F_{2k-2} + \sum_{m=1}^{j} a_m F_{2k+m} +  F_{2k+j+2}\mbox{ }\lowersub{\alpha Z}.$$
    
    Case 2: $b_{j+2} = 1$ and $a_{j+1} = 1$. Then 
    $$2F_{2k} + \sum_{m=2}^{j+2} b_m F_{2k+m}\ =\ F_{2k-2} + \sum_{m=1}^{j} a_m F_{2k+m} +  F_{2k+j+3}\mbox{ }\lowersub{\alpha Z}.$$

    Case 3: $b_{j+2} = 2$. Let $j^* = \max\{m: 2\le m\le j, 2|m, b_{m} = 0\}$. Then
      \begin{align*}
      &2F_{2k} + \sum_{m=2}^{j+2} b_m F_{2k+m}\\
      \ =\ &2F_{2k} + \sum_{m=2}^{j^*-2} b_m F_{2k+m} + \sum_{\substack{j^*+2\le m\le j\\ 2|m}} F_{2k + m} + 2F_{2k+j+2}\\
      \ =\ &F_{2k-2} + \sum_{m=1}^{j^*-1} a_m F_{2k+m}\mbox{ }\lowersub{\alpha Z} + 2F_{2k+j^*+2} + \sum_{\substack{j^*+5\le m\le j+3\\ 2\nmid m}} F_{2k+m}.
      \end{align*}
      \begin{enumerate}
          \item[] Case 3.1: $a_{j^*-1} = 0$. Then
          \begin{align*}&2F_{2k} + \sum_{m=2}^{j+2} b_m F_{2k+m}\\
          \ =\ &F_{2k-2} + \sum_{m=1}^{j^*-2} a_m F_{2k+m} + F_{2k+j^*} + F_{2k+j^*+3} + \sum_{\substack{j^*+5\le m\le j+3\\ 2\nmid m}} F_{2k+m}\mbox{ }\lowersub{\alpha Z}.
          \end{align*}
          \item[] Case 3.2: $a_{j^*-1} = 1$. Then 
          \begin{align*}&2F_{2k} + \sum_{m=2}^{j+2} b_m F_{2k+m}\\
          \ =\ &F_{2k-2} + \sum_{m=1}^{j^*-3} a_m F_{2k+m}  + F_{2k+j^*+1} + F_{2k+j^*+3} + \sum_{\substack{j^*+5\le m\le j+3\\ 2\nmid m}} F_{2k+m}\mbox{ }\lowersub{\alpha Z}.
          \end{align*}
      \end{enumerate}
\end{proof}

\begin{proof}[Proof of Lemma \ref{toZeck0}, Equation \eqref{e7}]
    We prove by induction. It is readily verified that \eqref{e7} is true when $M \in \{0, 2, 4\}$. Assume that \eqref{e7} holds for $M = j$ for some even $j\ge 4$. We show that \eqref{e7} holds for $M = j+2$. 

Case 1: $b_{j+2} = 1$. By the induction hypothesis, we have
\begin{align*}
F_{2k} + \sum_{m=1}^{j+2} b_m F_{2k+m}\mbox{ }\lowersub{\alpha CG}&\ =\ F_{2k} + \sum_{m=1}^{j} b_m F_{2k+m}\mbox{ }\lowersub{\alpha CG} + F_{2k+j+2}\\
&\ =\ \begin{cases}F_{2k-2}\\ F_{2k}\end{cases} + \sum_{m=1}^{j+1} a_m F_{2k+m}\mbox{ }\lowersub{\alpha Z} + F_{2k+j+2}\\
&\ =\ \begin{cases}F_{2k-2}\\ F_{2k}\end{cases} + \sum_{m=1}^{j} a_m F_{2k+m} + \begin{cases}F_{2k+j+2}\mbox{ }\lowersub{\alpha Z}\\ F_{2k+j+3}\mbox{ }\lowersub{\alpha Z}\end{cases}\\
&\ =\ \begin{cases}F_{2k-2}\\ F_{2k}\end{cases} + \sum_{m=1}^{j+3} a_m F_{2k+m} \mbox{ }\lowersub{\alpha Z}.
\end{align*}

Case 2: $b_{j+2} = 2$. Let $A = \{2\le m\le j: 2|m, b_m = 0\}$.

\begin{enumerate}
    \item[] Case 2.1: $A = \emptyset$. Then 
    \begin{align*}
    &F_{2k} + \sum_{m=1}^{j+2} b_m F_{2k+m}\mbox{ }\lowersub{\alpha CG}\\
    \ =\ &F_{2k} + F_{2k+2} + F_{2k+4} + \cdots + F_{2k+j} + 2F_{2k+j+2}\\
    \ =\ &F_{2k-2} + F_{2k+1} + F_{2k+3}  + \cdots + F_{2k+j+1} + F_{2k+j+3}\mbox{ }\lowersub{\alpha Z}.
    \end{align*}
    \item[] Case 2.2: $A\neq \emptyset$. Let $j^* = \max A$. Then by the inductive hypothesis, 
    \begin{align}\label{re21}
        &F_{2k} + \sum_{m=1}^{j+2} b_m F_{2k+m}\mbox{ }\lowersub{\alpha CG}\nonumber\\
        \ =\ &F_{2k} + \sum_{m=2}^{j^*-2} b_m F_{2k+m} + \sum_{\substack{j^*+2\le m\le j\\ 2|m}}  F_{2k+m} + 2F_{2k+j+2}\nonumber\\
        \ =\ &\begin{cases}F_{2k-2}\\ F_{2k}\end{cases} + \sum_{m=1}^{j^*-1} a_m F_{2k+m}\mbox{ }\lowersub{\alpha Z} + F_{2k+j^*} + F_{2k+j^*+3} + \sum_{\substack{j^*+5\le m\le j+3\\ 2\nmid m}} F_{2k+m}.
    \end{align}
    If $a_{j^*-1} = 0$, then \eqref{re21} is the Zeckendorf decomposition of $F_{2k} + \sum_{m=1}^{j+2} b_m F_{2k+m}$.
    If $a_{j^*-1} = 1$, then 
    \begin{align*}
        &F_{2k} + \sum_{m=1}^{j+2} b_m F_{2k+m}\mbox{ }\lowersub{\alpha CG}\\
        \ =\ & \begin{cases}F_{2k-2}\\ F_{2k}\end{cases} + \sum_{m=1}^{j^*-3} a_m F_{2k+m} + F_{2k+j^*+1} + F_{2k+j^*+3} + \sum_{\substack{j^*+5\le m\le j+3\\ 2\nmid m}} F_{2k+m}\mbox{ }\lowersub{\alpha Z}.
    \end{align*}
\end{enumerate}
\end{proof}

\section*{Acknowledgement} The authors thank the anonymous referees for their careful reading and helpful suggestions, which have improved the exposition of the paper.


\ \\

\begin{thebibliography}{9999}
\bibitem{Ba} C. Ballot, On Zeckendorf and base $b$ digit sums, \textit{Fibonacci Quart.} \textbf{51} (2013), 319--325.
\bibitem{Bu} R. Burns, Chung-Graham and Zeckendorf representations, preprint. Available at: \url{https://arxiv.org/abs/2502.18870}.
\bibitem{CHS} L. Carlitz, V. E. Hoggatt, Jr., and R. Scoville, Fibonacci representations, \textit{Fibonacci Quart.} \textbf{10} (1972), 1--28.
\bibitem{CFHMN} M. Catral, P. Ford, P. Harris, S. J. Miller, and D. Nelson, Generalizing Zeckendorf’s theorem: the Kentucky sequence, \textit{Fibonacci Quart.} \textbf{52} (2014), 68--90.
\bibitem{Cha1} S. Chang, Average number of Zeckendorf integers, \textit{J. Number Theory} \textbf{186} (2018), 452--472.
\bibitem{Cha11} S. Chang, Distribution of Zeckendorf expressions, \textit{Integers} \textbf{25} (2025), 45 pp.
\bibitem{Cha2} S. Chang, Expansions of positive integers in terms of Fibonacci terms with equally-spaced indices, to appear in \textit{Elem. Math.}. Available at: \url{https://arxiv.org/abs/2504.00314}.
\bibitem{CKV} H. V. Chu, A. M. Kanji, and Z. L. Vasseur, Fixed-term decompositions using even-indexed Fibonacci numbers, \textit{PUMP J. Undergrad. Res.} \textbf{8} (2025), 359--373.
\bibitem{CG} F. R. K. Chung and R. L. Graham, On irregularities of distribution, \textit{Colloq. Math. Soc. J\'{a}nos Bolyai} \textbf{37} (1984), 181--222.
\bibitem{Day1} D. E. Daykin, Representation of natural numbers as sums of generalized Fibonacci numbers, \textit{J. London Math. Soc.} \textbf{35} (1960), 143--160.
\bibitem{Day2} D. E. Daykin, Representation of natural numbers as sums of generalized Fibonacci numbers. II, \textit{Fibonacci Quart.} \textbf{7} (1969), 494--510.
\bibitem{De1} M. F. Dekking, The sum of digits functions of the Zeckendorf and the base phi expansions,
\textit{Theoret. Comput. Sci.} \textbf{859} (2021), 70--79.
\bibitem{De2} M. F. Dekking, The structure of Zeckendorf expansions, \textit{Integers} \textbf{21} (2021), 10 pp.
\bibitem{DDKMMV} P. Demontigny, T. Do, A. Kulkarni, S. J. Miller, D. Moon, and U. Varma, Generalizing Zeckendorf’s Theorem to $f$-decompositions, \textit{J. Number Theory} \textbf{141} (2014), 136--158.
\bibitem{DFFHMPP} R. Dorward, P. L. Ford, E. Fourakis, P. E. Harris, S. J. Miller, E. Palsson, and H. Paugh, A generalization of Zeckendorf’s theorem via circumscribed $m$-gons, \textit{Involve} \textbf{10} (2017), 125--150.
\bibitem{DS} M. Drmota and M. Ska{\l}ba, The parity of the Zeckendorf sum-of-digits function, \textit{Manuscripta Math.} \textbf{101} (2000), 361--383.
\bibitem{G10} M. Griffiths, Digit proportions in Zeckendorf representations, \textit{Fibonacci Quart.} \textbf{48} (2010), 168--174.
\bibitem{G11} M. Griffiths, The golden string, Zeckendorf representations and the sum of a series, \textit{Am. Math. Mon.} \textbf{118} (2011), 497--507.
\bibitem{G14} M. Griffiths, Fixed-term Zeckendorf representations, \textit{Fibonacci Quart.} \textbf{52} (2014), 331--335.
\bibitem{HW} N. Hamlin and W. A. Webb, Representing positive integers as a sum of linear recurrence sequences, \textit{Fibonacci Quart.} \textbf{50} (2012), 99--105.
\bibitem{Ho} Y. Hosten, On the variation of the sum of digits in the Zeckendorf representation: an algorithm to compute the distribution and mixing properties, \textit{J. Integer Seq.} \textbf{27} (2024), 48 pp.
\bibitem{Kim} C. Kimberling, One-free Zeckendorf sums, \textit{Fibonacci Quart.} \textbf{21} (1983), 53--57.


\bibitem{MMMS} T. Martinez, S. J. Miller, C. Mizgerd, and C. Sun, Generalizing Zeckendorf's theorem to homogeneous linear recurrences, I, \textit{Fibonacci Quart.} \textbf{60} (2022), 222--230.

\bibitem{MMMMS} T. Martinez, S. J. Miller, C. Mizgerd, J. Murphy, and C. Sun, Generalizing Zeckendorf's theorem to homogeneous linear recurrences, II, \textit{Fibonacci Quart.} \textbf{60} (2022), 231--254.

\bibitem{Sh} A. Shutov, On the sum of digits of the Zeckendorf representations of two consecutive numbers, \textit{Fibonacci Quart.} \textbf{58} (2020), 203--207.

\bibitem{Ze} E. Zeckendorf, Representation des nombres naturels par une somme de nombres de Fibonacci ou
de nombres de Lucas, \textit{Bull. Soc. Roy. Sci. Liege} \textbf{41} (1972), 179--182.

\bibitem{Sl} N. J. A. Sloane et al., The On-Line Encyclopedia of Integer Sequences, 2023. Available at \url{https://oeis.org}.

\end{thebibliography}
\end{document}